	\newcommand\eq[1] {(\ref{#1})}
	\newtheorem{lemma}{Lemma}[section]
	\newtheorem{theorem}{Theorem}[section]
	\newtheorem{corollary}{Corollary}[section]
	\newtheorem{question}{Question}
	\newcommand{\bfm}[1]{\mbox{\boldmath ${#1}$}}
	\newcommand{\beqa}{\begin{eqnarray}}
	\newcommand{\eeqa}[1]{\label{#1}\end{eqnarray}}
	\newcommand{\beq}{\begin{equation}}
	\newcommand{\eeq}[1]{\label{#1}\end{equation}}
	\newcommand{\Gd}{\delta}
	\newcommand{\GD}{\Delta}
	\newcommand{\BGv}{\bfm\nu}
	\newcommand{\CD}{{\cal D}}
	\def\Bx{{\bf x}}
	\def\By{{\bf y}}
	\def\Bz{{\bf z}}
	\def\BB{{\bf B}}
	\def\BE{{\bf E}}
	\def\BH{{\bf H}}
	\def\BM{{\bf M}}
	\def\B0{{\bf 0}}
	\def \ph {\varphi}
	\def \RR {{\mathbb R}}
	\def \ba {\begin{array}}
	\def \ea {\end{array}}
	\newtheorem {Thm} {Theorem} [section]
	\newtheorem {Pro} [Thm] {Proposition}
	\newtheorem {Adef} [Thm] {Definition}
	\newtheorem {Arem} [Thm] {Remark}
	\newtheorem {Aexa} [Thm] {Example}
	\newtheorem {Anot} [Thm] {Notation}
	\def \refe #1.{(\ref{#1})}
	\def \reff #1.{figure~\ref{#1}}
	\def \refs #1.{section~\ref{#1}}
	\def \refss #1.{subsection~\ref{#1}}
	\def \refD #1.{Definition~\ref{#1}}
	\def \refT #1.{Theorem~\ref{#1}}
	\def \refL #1.{Lemma~\ref{#1}}
	\def \refC #1.{Corollary~\ref{#1}}
	\def \refP #1.{Proposition~\ref{#1}}
	\def \refR #1.{Remark~\ref{#1}}
	\def \refE #1.{Example~\ref{#1}}
	\def \refN #1.{Notation~\ref{#1}}
	\newcommand{\eps}{\varepsilon}
	\newcommand{\Ker}{{\rm Ker}}
	\newcommand{\dist}{\mbox{dist}}
	\newif\ifPDF
	\newenvironment{keywords}
	{\noindent{\bf Key words.}\small}{\par\vspace{1ex}}
	\title{On the active manipulation of EM fields in open waveguides}
	\author{
	    Richard Albanese\thanks{ADED Co., San Antonio, TX} and Daniel Onofrei\thanks{
	        Department of Mathematics,
	        University of Houston,
	        Houston, Texas 77004; onofrei@math.uh.edu
	    }
	}
\begin{document}
	
	\maketitle
	
	\tableofcontents
	
	
	\begin{abstract}
	In this paper we present the extension of the results proposed in \cite{Onofrei-S} and study the problem of active control of TM waves propagating in a waveguide. The main goal is to cancel, in a prescribed near field region, the longitudinal component of the electric field of an incoming TM wave while having vanishingly small fields near the waveguide boundary. The main analytical challenge is to design appropriate source functions for the scalar Helmholtz equation so that the desired cancellation effect will be obtained. We show the existence of a class of solutions to the problem and provide  numerical support of the results. Discussion on the feasibility of the proposed approach as well as possible design strategies are offered.
	\end{abstract}
	
	
	\begin{keywords}
	    Field manipulation, Helmholtz equation, waveguides, layer potentials, integral equation, antenna synthesis, active exterior cloaking.
	\end{keywords}
	
	
	
	\section{Introduction}
	
	The technique of manipulating acoustic and electromagnetic fields
	in desired regions of space has advanced in the recent
	years, with fascinating applications, such as
	cloaking, the creation of illusions, secure remote communication,
	focusing energy, and novel imaging techniques. The development can
	be roughly classified into two categories. 
	
	One main approach controls fields in the regions of interest by
	changing the material properties of the medium in certain surrounding regions
	while a second approach actively manipulates (active control) specially designed sources (antennas). Thus, briefly, we distinguish passive from active methods by the use of sources in the latter approach.
	
	In \cite{Green1} (see also \cite{Gunther-pr}) the authors presented an early rigorous discussion of the passive manipulation of fields in the context of
	quasistatics cloaking (see also 
	\cite{Dol}, \cite{Post} and \cite{LaxN} where the invariance to a change of variables is fully explained and the transformed material are described). This work was later extended in \cite{Pendry} to the
	general case of passive manipulation of fields in the finite frequency regime (see also the review \cite{Chan2}
	and references therein). These passive strategies are now known as ``transformation optics''.
	The similar strategy in the context of
	acoustics was proposed in \cite{Green4} (see also \cite{Cummer} and the review
	\cite{Chan3} and references therein). The idea behind
	transformation optics/acoustics is the invariance of the
	corresponding Dirichlet to Neumann-map (boundary measurements map)
	considered on some external boundary with respect to suitable
	change of variables which are identity on the respective boundary.
	This invariance result implies that two different materials (the initial one
	and the one obtained after the change of variables is applied)
	occupying some region of space $\Omega$, will have the same
	boundary measurements maps on $\partial \Omega$ and thus be
	equivalent from the point of view of an external observer. This
	mathematical structure leads to important applications, such as cloaking,
	field concentrators (\cite{Gunther1}) or field rotators, illusion optics, etc. (see
	\cite{Chan2}, \cite{Chan3}, \cite{Green2}, \cite{Alu} and references therein), and sensor cloaking while maintaining sensing capability \cite{Gunther2}, \cite{ALU4}.

	
	Additional passive techniques proposed in the literature (other than transformation optics strategies) include, plasmonic
	designs (see \cite{Alu} and references therein), strategies based
	on anomalous resonance phenomena (see \cite{Mil1}, \cite{Mil2},
	\cite{Mil3}), conformal mapping techniques (see
	\cite{Ulf1},\cite{Ulf2}), and complimentary media strategies (see
	\cite{Chan}).
	
		Active designs for the manipulation of fields appear to have occurred initially in the 
	 	context of low-frequency acoustics 
	 	(or active noise cancellation). Especially notable are pioneer works of Leug \cite{Leug} (feed-forward control of sound) and Olson \& May \cite{Olson-May} (feedback control of sound). The reviews \cite{Tsynkov}, \cite{Peterson}, \cite{Fuller}, \cite{Peake} \cite{Elliot} provide detailed accounts of past and recent developments in acoustic active control.
	 	
	Active control strategies are based on Huygens principle. The
	{\bf{\textit{interior}}} active cloaking strategy proposed in \cite{Miller}
	employs active boundaries while the {\bf{\textit{exterior}}} active cloaking
	scheme discussed in \cite{OMV1}, \cite{OMV2}, \cite{OMV3},
	\cite{OMV4} (see also \cite{CTchan-num}) uses a discrete number of active sources
	(antennas) to manipulate the fields. The active exterior strategy
	for 2D quasistatics cloaking was introduced in \cite{OMV1}, 
	and based on \emph{a priori} information about the incoming field.
	Guevara Vasquez, Milton and Onofrei \cite{OMV1} constructively
	described how one can create an almost zero field external region
	while maintaining a very small scattering effect in the far field.
	The proposed strategy did not work for objects close to the
	antennas, it cloaked large objects only when they are far enough
	to the antenna (see \cite{OMV4}). Also, the method was not adaptable for three
	space dimensions. The finite frequency case was studied in the
	last section of \cite{OMV1} and in \cite{OMV3} (see also
	\cite{OMV4} for a recent review) where three active sources
	(antennas) were needed to create a zero field region in the
	interior of their convex hull while creating a very small
	scattering effect in the far field. The broadband character of the
	proposed scheme was numerically observed in \cite{OMV2}. A
	general approach, based on the theory of boundary layer potentials, is proposed in \cite{Onofrei-S} for the solution of the active manipulation of quasi-static fields with one active source (antenna). 
	Several authors proposed experimental designs of active cloaking schemes in various regimes, \cite{Eleftheriades1}, \cite{Eleftheriades2}, \cite{Du} and \cite{Ma}.
	In this paper, we extend the results presented in \cite{Onofrei-S} to the case of locally nulling TM modes propagating in an open circular waveguide. The mathematical problem is formulated in the context of the scalar Helmholtz equation and the feasibility study discusses the relevance of this analysis in the context of a real antenna placed inside the waveguide and carrying electric conduction currents. 
	
	The paper is organized as follows: In Section~\ref{SEC:Form} we
	introduce the main problem of local nulling a waveguide TM mode (Question \ref{QUST:Main}). Then, in Section \ref{SEC:Exist} we prove the mathematical existence of a class of solutions for Question \ref{QUST:Main} and in \ref{SEC:Num} we offer several numerical simulations to support our 
	results. Antenna feasibility considerations and possible design strategies are discussed in Section ~\ref{F}. 
	
	\section{Problem formulation}
	\label{SEC:Form}
	
	In this paper $B_r(\Bx)$ will denote the disk situated in a plane perpendicular on the $Ox$ axis centered in point $\Bx\in\RR^3$ and of radius $r>0$. 
	  Consider the waveguide $D$, given by $D=(-\infty, +\infty)\times B_R(\B0)$, i.e, a circular cylinder of cross 
	 sectional area $B_R(\B0)$ and of infinite extent in the $Ox$ direction. We further consider that $D$ is a hollow waveguide filled with air and that the walls of the waveguide are made of perfect conductors.
	 In reality the waveguide walls have some level of loss, and by using the results of \cite{Kirsch}, we see that the assumption of zero loss provides a very good approximation for the case of small losses. 
	Inside the cylinder we consider an antenna (fixed there by a transparent dielectric), occupying the region $D_a$ which is modeled as a coaxial cylinder with smooth enough boundary and small cross sectional area, (Figure \ref{FIG:Geo} describes a the geometry for an antenna with straight edges).
	
	The region of interest (control region) will be denoted by $D_c$, and it will be assumed to be an open domain with $D_c\subsetneq D\setminus{\bar D}_a$. 
	 In Section \ref{SEC:Num} we will assume $D_c$ to be an annular shell coaxial with and in the near field of the antenna $D_a$,  defined by $D_c=\{B_{\Gd+l_1+l_2}(\B0)\setminus B_{\Gd+l_1}(\B0)\}\times[-a,a]$, where $l_1,l_2,a$ are given positive reals (see also Figure \ref{FIG:Geo}). 
	 Let us next denote by $(\BE_{i},\BB_{i})$ a given incoming field propagating down the waveguide and by $E_{xi}$ the longitudinal component of its electric field. The general question we want to study is:
	
	\begin{question}\label{QUST:Main}
	Can we synthesize an active source (antenna) $D_a$ (to be placed inside the waveguide $D$ as in Figure \ref{FIG:Geo}), modeled as a
	 magnetic current $\BM(\Bx)$ supported on $\partial D_a$, such that the field generated by it, $(\BE_{s},\BB_{s})$, with $\BE_s=(E_{xs}, E_{ys}, E_{zs})$ and $\BB_s=(B_{xs}, B_{ys}, B_{zs})$, has the property that in the region of interest $D_c$, $E_{xs}$ almost cancels $E_{xi}$? (Note that in the sketch of Figure \ref{FIG:Geo} we only presented a cylindrical antenna with straight edges although in practice one will consider the antenna with a smooth surface, e.g., cylindrical shape with hemispherical edges).
	\end{question}
	
	We divide the general problem presented in Question \ref{QUST:Main} in two subproblems:
	
	\begin{itemize}
	
	\item{{\bf{\underline{Problem 1.}}}}  Show that there exists boundary data $E_b$, such that $u$, the solution of the exterior Dirichlet problem for the Helmholtz equation with $E_b$ specified on $\partial D_a$, has the property that in the region of interest $D_c$ it almost cancels $E_{xi}$ while having vanishingly small values on the boundary of the waveguide?\medskip
	
	\item{{\bf{\underline{Problem 2.}}}} Based on the result of Problem 1, can we synthesize an active source (antenna) $D_a$ (to be placed inside the waveguide $D$ as in Figure \ref{FIG:Geo}), modeled as a surface magnetic current $\BM(\Bx)$ supported on $\partial D_a$ such that $E_{xs}$, the longitudinal component of its electric field, will be very close to $u$ described above and thus will posses the desired control properties?
	
	\end{itemize}
	
	We mention that {\bf{\underline{Problem 1}}} above studies the existence of solutions for a class of exterior Helmholtz problems which, as will be proved in Section \ref{F}, are essential in showing the existence of solutions for Question \ref{QUST:Main}. On the other hand, {\bf{\underline{Problem 2}}} above studies the feasibility of the antenna synthesis question.
	In this paper we will focus on {\bf{\underline{Problem 1}}} above in Section \ref{SEC:Exist} and Section \ref{SEC:Num} and present a brief feasibility study in Section \ref{F}. A much more detailed discussion about the feasibility of the second step in the context of the general antenna synthesis problem will be soon presented in \cite{OA}. 
	\begin{figure}[!ht]
	    \centering
	    \includegraphics[width=3in, height=3in]{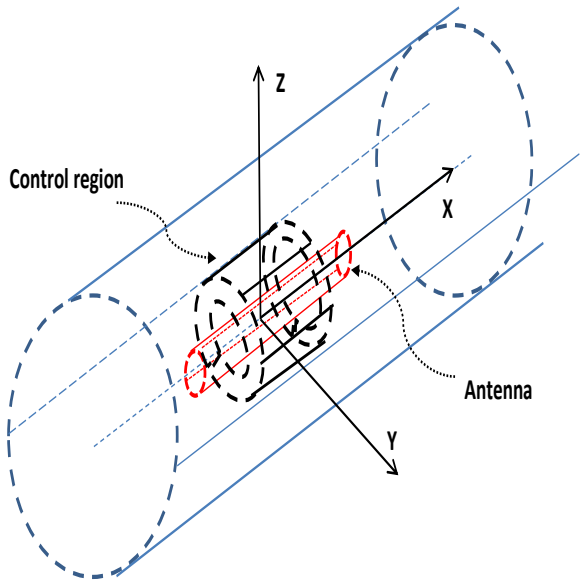}
	    \caption{The geometrical setting of Formulation A.}
	    \label{FIG:Geo}
	\end{figure}

		\section{Existence results}
		
		\label{SEC:Exist}
	
	A schematic illustration of the problem setting (where we assumed for simplification straight antenna edges) and various
	geometrical parameters are shown in Figure ~\ref{FIG:Geo}. Assume a general TM incoming electromagnetic wave $(\BB_i,\BE_i)$ with 
	$\BB_i=(0,B_{yi},B_{zi})$ and $\BE_i=(E_{xi},E_{yi},E_{zi})$. By recalling that each component of the electric and magnetic field satisfy the 3D scalar Helmholtz equation in their domain of analyticity, by using the boundary conditions on the surface of the waveguide, Problem 1 of Question~\ref{QUST:Main} reduces to the following mathematical formulation,
	
	\paragraph{\bf Mathematical equivalent of Problem 1}
	
	 Let $0<\eps\ll 1$ be fixed and assume $-\lambda^2$ is not a Neumann eigenvalue for the Laplacean in $D_a$. Consider an incoming TM wave $(\BE_{i},\BB_{i})$ and let $E_{xi}$ be the longitudinal component of its electric field. Then the equivalent mathematical formulation for Problem 1 of Question~\ref{QUST:Main} reads:
	 \vspace{0.2cm}

	 {\bf{\underline{Problem 1*.}}}
	 \noindent Find $E_b\in C(\partial D_a )$ such that there exists $u\in H^1(\RR^3\setminus {\bar D}_a)$ satisfying,
	 \beq\vspace{0.15cm}\left\{\vspace{0.15cm}\begin{array}{llll}
	 \GD u+\lambda^2u=0 \mbox{ in }\RR^3\setminus \overline{D}_a\vspace{0.15cm}\\
	 u=E_b \mbox{ on }\partial D_a\vspace{0.15cm}\\
	 \displaystyle\left(\frac{x}{|x|},\nabla u(x)) -iku(x)\right)=o\left(\frac{1}{|x|}\right),\mbox{ as }|x|\rightarrow\infty, \mbox{ uniformly for all directions $\displaystyle\frac{x}{|x|}$ }\nonumber \end{array}\right.,
	 \eeq{2}
	and
	\beq\Vert u\Vert_{L^2(\partial D)}\leq \eps,\;\Vert u+E_{xi}\Vert_{H^1({\bar{D}_c})}\le \eps.\eeq{2'}
	It is well known that, (see \cite{CoKr-Book83}), given $E_b\in C(\partial D_a)$, {\bf{\underline{Problem 1*.}}} has a unique solution in $H^1(\RR^3\setminus {\bar D}_a)$ for all $\lambda$ satisfying the above assumptions. 
	
	Thus, it remains to be proved that one can find a boundary data $E_b\in C(\partial D_a)$ so that the estimates \eqref{2'} are satisfied.
		For $L>0$ large enough consider $D_L$ sooth sub-domain of $\RR^3$ defined by \beq
			\{(x,y,z)\in D, |x|<L\}\subset D_{L}\subset \{(x,y,z)\in D, |x|<2L\}
			\eeq{3''}
	In what follows we make the assumption that the antenna, the region of control and the boundary of the waveguide are sufficiently well separated, i.e., there exist $D_{a'}, D_1$ smooth sub-domains of $\RR^3$ so that 
	
	\beq
	D_c\subsetneq D_1\subsetneq D_L\setminus{\bar{D}}_a.
	\eeq{3}  
	Consider also 
	\beq
	D_{a'}\subsetneq D_a
	\eeq{3'}
	
	In what follows we will also make the following assumption:
	\begin{equation}
	\label{ass-lambda}
	-\lambda^2 \mbox{ is not a Laplace Dirichlet eigenvalue in $D_1$. }
	\end{equation}
	Note that the set $D_1$ in \eqref{3} can be chosen so that \eqref{ass-lambda} is as well satisfied. We next introduce the following space $\Xi$,
	\begin{equation}
	\label{EQ:Parameters}
	    \Xi \equiv L^2(\partial D_1)\times L^2(\partial D_L).
	\end{equation}
	Then $\Xi$ is a Hilbert space with
	respect to the scalar product given by
	\begin{equation}\label{EQ:IP}
	(\varphi,\psi)_{\Xi}=\int_{\partial
	D_1}\varphi_1(\By){\overline \psi}_1(\By)  ds_{\By}+\int_{\partial
	D_L}\varphi_2(\By){\overline \psi}_2(\By) ds_{\By},
	\end{equation}
	for all $\varphi \equiv (\varphi_1,\varphi_2)$ and $\psi=(\psi_1,\psi_2)$ in
	$\Xi$ where $(\cdot)$ above, denotes the complex conjugate.
	
	The next Theorem is the main result of the Section and it shows the existence of a sequence of solutions for 
         problem \eqref{2}, \eqref{2'} such that 
         Problem 1 of Question \ref{QUST:Main} will have a positive answer.  We have:
	
	\vspace{0.2cm}
	
	\begin{theorem}
	\label{principala-teorema}
	
	Let $\lambda$ satisfying the hypothesis of ${\bf {Problem 1*}}$ and (\ref{ass-lambda}). Consider the following integral operator,
	$K:L^2(\partial D_{a'})\rightarrow \Xi$, defined as
	\begin{equation}\label{EQ:K}
	    Ku(\Bx,\Bz) =  (K_1u(\Bx), K_2u(\Bz)),
	\end{equation}
	for any $u(\Bx)\in L^2(\partial D_{a'})$, where
	\beqa
	K_1u(\Bx)&=&\int_{\partial D_{a'}}u(\By)\frac{\partial
	\Phi(\Bx,\By)}{\partial \BGv_{\By}}ds_{\By}, \mbox{ for }\Bx\in
	\partial D_1,\nonumber\\
	&&\nonumber\\
	K_2u(\Bz)&=&\int_{\partial D_{a'}}u(\By)\frac{\partial
	\Phi(\Bz,\By)}{\partial \BGv_{\By}}ds_{\By}, \mbox{ for }\Bz\in
	\partial D_L,
	\eeqa{8}
	with $\displaystyle\BGv_{\By}$ denoting the normal exterior to $\partial D_{a'}$ and where $\Phi(\Bx, \By)$
	represents the fundamental solution of the relevant Helmholtz operator, i.e.,
	\beq
	\Phi(\Bx,\By)=\frac{e^{i\lambda|\Bx-\By|}}{|\Bx-\By|}.
	\eeq{9}
	Then, the operator $K$ is compact and has a dense range. Moreover, for $\alpha_n\rightarrow 0$ (i.e., Tikhonov regularizers) the functions $v_n\in C(\partial D_a)$ defined by
	\begin{equation}
	\label{reg}
	v_n=(\alpha_n I+K^*K)^{-1}K^*v, \mbox{ with } v=(-E_{ix},0),\end{equation} satisfy that, for any $\epsilon<<1$, there exists an $N_\epsilon\in\mathbb N$ such that for any $n>N_\epsilon$ the functions $u_n$ given by,
	\begin{equation}
	\label{defun-0}
	u_n=\int_{\partial D_{a'}}v_n(\By)\frac{\partial \Phi(\Bx,\By)}{\partial \nu_\By}ds_\By\end{equation} satisfy,
	\begin{equation}
	\label{un-0}
	\left\{\begin{array}{lll}
	\vspace{0.2cm}\GD u_n +\lambda^2 u_n=0, \mbox{ in } D\setminus {\bar{D}}_{a'}& \vspace{0.2cm}\\
	\Vert u_n\Vert_{L^2(\partial D)}\leq \eps & \vspace{0.2cm}\\
	\Vert u_n+E_{xi}\Vert_{H^1( D_c)}\leq\eps.\vspace{0.2cm}\end{array}\right.
	\end{equation}
	\end{theorem}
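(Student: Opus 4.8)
The plan is to build the desired fields $u_n$ as single-layer–type potentials with densities on $\partial D_{a'}$, and to derive the two estimates in \eqref{un-0} from the density of the range of $K$ together with standard properties of Tikhonov regularization. First I would verify that $K$ is compact: both $K_1$ and $K_2$ are integral operators whose kernels are the normal derivative of the analytic (off-diagonal) Helmholtz kernel $\Phi$, and since $\partial D_{a'}$ is disjoint from $\partial D_1$ and $\partial D_L$ (by \eqref{3}, \eqref{3'}), these kernels are smooth; hence $K_1,K_2$ map $L^2(\partial D_{a'})$ boundedly into, say, $C(\partial D_1)$ and $C(\partial D_L)$, and compactly into $L^2$, so $K$ is compact into $\Xi$.

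The core step is the dense range of $K$. For this I would argue by duality: if $(\psi_1,\psi_2)\in\Xi$ is orthogonal to the range of $K$, then the function $w(\By)=\int_{\partial D_1}\frac{\partial\Phi(\Bx,\By)}{\partial\nu_\By}\overline{\psi_1(\Bx)}\,ds_\Bx+\int_{\partial D_L}\frac{\partial\Phi(\Bz,\By)}{\partial\nu_\By}\overline{\psi_2(\Bz)}\,ds_\Bz$ vanishes on $\partial D_{a'}$. Both integrals define functions solving the Helmholtz equation $\GD w+\lambda^2 w=0$ in $D_{a'}$ (and indeed in all of $D_L$ away from $\partial D_1\cup\partial D_L$, being superpositions of radiating fundamental solutions with sources outside $\overline{D_{a'}}$). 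Since $w=0$ on $\partial D_{a'}$ and $-\lambda^2$ is not a Dirichlet eigenvalue in $D_{a'}$ (this is where \eqref{ass-lambda} on $D_1$, together with $D_{a'}\subsetneq D_a$ and the eigenvalue hypothesis on $D_a$, must be invoked — one needs the relevant domain to be non-resonant), $w\equiv 0$ in $D_{a'}$, and then by unique continuation $w\equiv 0$ throughout the connected component of $\RR^3\setminus(\partial D_1\cup\partial D_L)$ containing $D_{a'}$. Examining the jump relations of the double-layer potentials $w$ across $\partial D_1$ and $\partial D_L$ forces $\psi_1=0$ and $\psi_2=0$; hence $K^*$ is injective and $K$ has dense range in $\Xi$.

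Given density of the range, the target datum $v=(-E_{xi},0)\in\Xi$ (here one uses that $E_{xi}$, a component of a TM solution of the Helmholtz equation, restricts to an $L^2$ function on $\partial D_1$) lies in $\overline{\operatorname{Ran}K}$, so by the standard convergence theory of Tikhonov regularization for compact operators (e.g.\ \cite{CoKr-Book83}, \cite{Kirsch}) the regularized solutions $v_n=(\alpha_n I+K^*K)^{-1}K^*v$ satisfy $Kv_n\to v$ in $\Xi$ as $\alpha_n\to 0$; each $v_n$ lies in the range of $K^*$, hence is continuous on $\partial D_{a'}$, and one extends it to an element of $C(\partial D_a)$. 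Writing $u_n$ as in \eqref{defun-0}, the equation $\GD u_n+\lambda^2 u_n=0$ in $D\setminus\overline{D_{a'}}$ is immediate since $u_n$ is a single-layer–type potential with source on $\partial D_{a'}$. The condition $\|Kv_n-v\|_\Xi\to 0$ means exactly $\|u_n\|_{L^2(\partial D_L)}\to 0$ and $\|u_n+E_{xi}\|_{L^2(\partial D_1)}\to 0$; since $\partial D\cap\{|x|<L\}\subset\partial D_L$ up to the end caps (and one may enlarge $D_L$ / iterate in $L$, or absorb the end-cap contribution using that $u_n$ is radiating), the first gives $\|u_n\|_{L^2(\partial D)}\le\eps$ for large $n$. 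For the control estimate, I would upgrade $L^2(\partial D_1)$-smallness to $H^1(D_c)$-smallness by interior elliptic regularity: both $u_n$ and $-E_{xi}$ solve the Helmholtz equation inside $D_1$, so on the compactly contained subdomain $D_c\subsetneq D_1$ one has $\|u_n+E_{xi}\|_{H^1(D_c)}\le C\|u_n+E_{xi}\|_{L^2(D_1)}$, and the $L^2(D_1)$ bound follows in turn from the $L^2(\partial D_1)$ bound via the solution operator of the (non-resonant, by \eqref{ass-lambda}) Dirichlet problem in $D_1$. Choosing $N_\epsilon$ so that both quantities are below $\eps$ for $n>N_\epsilon$ completes the proof.

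The main obstacle I anticipate is the passage from boundary smallness on $\partial D_1$ and $\partial D_L$ to the two norms actually appearing in \eqref{un-0}: one must justify the elliptic-regularity / trace chain $L^2(\partial D_1)\to L^2(D_1)\to H^1(D_c)$ (which requires $\overline{D_c}\subset D_1$ with room to spare, exactly \eqref{3}, and the non-resonance \eqref{ass-lambda}), and one must handle the geometric mismatch between $\partial D$ (infinite cylinder) and $\partial D_L$ (bounded surface) — controlling the field on the portion of $\partial D$ beyond $D_L$ using the radiation condition and the fact that the sources are compactly supported. Getting the non-resonance hypotheses to line up for the right auxiliary domain in the density argument is the other delicate point.
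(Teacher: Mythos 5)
Your overall architecture matches the paper's: compactness of $K$, density of its range via injectivity of $K^*$ (potential theory, unique continuation, jump relations), Tikhonov regularization to produce $v_n$ with $Kv_n\to(-E_{xi},0)$, the interior regularity estimate from $L^2(\partial D_1)$ to $H^1(D_c)$, and the passage from $\partial D_L$ to $\partial D$ via decay of the potential. However, the key step --- injectivity of $K^*$ --- contains a genuine error. The kernel of $K$ is $\partial\Phi(\Bx,\By)/\partial\BGv_{\By}$ with $\By\in\partial D_{a'}$, so the adjoint $K^*\psi$, evaluated at a point of $\partial D_{a'}$, is the \emph{normal derivative on} $\partial D_{a'}$ of the single-layer potential
$W(\cdot)=\int_{\partial D_1}\overline{\Phi}(\Bx,\cdot)\,\psi_1(\Bx)\,ds_{\Bx}+\int_{\partial D_L}\overline{\Phi}(\Bz,\cdot)\,\psi_2(\Bz)\,ds_{\Bz}$.
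Your $w$, as written, still carries the factor $\partial/\partial\nu_{\By}$ with $\By$ the $\partial D_{a'}$ variable; it is therefore only defined on $\partial D_{a'}$ and is not a Helmholtz solution in $D_{a'}$. The condition $K^*\psi=0$ is thus not a homogeneous Dirichlet condition for anything, but the homogeneous \emph{Neumann} condition $\partial W/\partial\BGv=0$ on $\partial D_{a'}$ for the single-layer potential $W$. Consequently the non-resonance needed at this point is that $-\lambda^2$ is not a \emph{Neumann} eigenvalue of $D_{a'}$ (the condition inherited from the hypothesis of Problem 1*, with $D_{a'}$ chosen suitably), not a Dirichlet condition on $D_{a'}$; your hedge about ``which domain must be non-resonant'' leaves exactly this unresolved. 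The Dirichlet assumption \eqref{ass-lambda} on $D_1$ enters only later: after unique continuation gives $W=0$ in $D_L\setminus\overline{D}_1$ and continuity of the single layer gives $W=0$ on $\partial D_1$, it is what allows you to conclude $W=0$ inside $D_1$, so that the jump of $\partial W/\partial\BGv$ across $\partial D_1$ yields $\psi_1=0$.

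A second, smaller gap is the conclusion $\psi_2=0$: the jump of $\partial W/\partial\BGv$ across $\partial D_L$ recovers $\psi_2$ only once you also know the \emph{exterior} limit vanishes, i.e. $W=0$ in $\RR^3\setminus\overline{D}_L$. This does not follow from unique continuation, since the exterior is a different connected component of $\RR^3\setminus(\partial D_1\cup\partial D_L)$; the paper obtains it from the Green representation of $W$ outside $D_L$, combined with $W=0$ in $\overline{D}_L$ and the interior value of $\partial W/\partial\BGv$ on $\partial D_L$ already determined by the jump relations. With these two repairs your argument goes through and essentially coincides with the paper's proof.
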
 
	\begin{proof}
	The next lemma presents a technical regularity results and it is needed in the proof of the Theorem.. Since the proof is classical we do not include it here but point out that the result can be obtained by adapting the proof in \cite{Kress-Book99}, (Section 3.4).

	\begin{lemma}\label{lemma-1}
	
	Let $\lambda\in \RR$ as in Theorem \ref{principala-teorema}.
	Let $f_i\in C(\partial D_1))$, and consider $v_i\in C^2(D_1)\cap C(\overline{D_1})$
	be the solution of the following interior Dirichlet problem,
	\beq
	\left\{\begin{array}{ll}
	\GD v_i +\lambda^2 v_i=0 \mbox{ in } D_1\vspace{0.15cm}\\
	v_i=f_i \mbox{ on } \partial
	D_1.\vspace{0.15cm}\end{array}\right.
	\eeq{6}
	Then we have,
	$$\displaystyle\Vert v_i\Vert_{H^1(D_c)}\leq
	C_i\Vert f_i\Vert_{L^2(\partial D_1)},$$
	where $C_i=C(\lambda,D_c,D_1)$.
	\end{lemma}

	We are now ready to present the proof of Theorem \eqref{principala-teorema}. Let us consider the integral operator,
	$K:L^2(\partial D_a)\rightarrow \Xi$, defined at \eqref{8}.
	
	The next Lemma given without proof, is a simple consequence of classical results in potential theory.
	
	\begin{lemma}
	\label{lemma-2}
	    The operator $K$ defined in~\eq{EQ:K} is a compact linear operator from $L^2(\partial D_{a'})$ to $\Xi$.
	\end{lemma}
	
	Let us introduce further the adjoint operator of $K$, i.e., the operator
	$K^*:\Xi\rightarrow L^2(\partial D_{a'})$ defined through the relation,
	\begin{equation}\label{EQ:K*}
	    (Kv,u)_{\Xi}=(v,K^*u)_{L^2(\partial D_{a'})},\ \forall u\in\Xi, v\in L^2(\partial D_{a'}),
	\end{equation}
	where $(\cdot,\cdot)_\Xi$ is the scalar product on $\Xi$ defined in~\eq{EQ:IP}
	and  $(\cdot,\cdot)_{L^2(\partial D_{a'})}$ denotes the usual scalar product
	in $L^2(\partial D_{a'})$ defined as a vectorial space over the complex field. We check, by simple change of variables and algebraic
	manipulations, that the adjoint operator $K^*$ is given by,
	\beq
	\displaystyle K^*u(\Bx)=\int_{\partial
	D_1}u_1(\By)\frac{\partial {\overline \Phi}(\Bx,\By)}{\partial
	\BGv_{\Bx}}ds_{\By}+\int_{\partial D_L}u_2(\By)\frac{\partial
	{\overline \Phi}(\Bx,\By)}{\partial \BGv_{\Bx}}ds_{\By},
	\eeq{10}
	for any $u=(u_1,u_2)\in\Xi$ and $\Bx\in\partial D_{a'}$.
	
	From the compactness and linearity of $K$ as given in Lemma~\ref{lemma-2},
	we conclude that the adjoint operator $K^*$ is compact as well.
	Furthermore, let us denote by $\Ker(K^*)$ the kernel (i.e., null space) of $K^*$.
	Then we have the following result.
	\begin{Pro}\label{Pro-1}
	 If $\psi=(\psi_1,\psi_2)\in \Ker(K^*)$ then $\psi\equiv(0,0)$ in $\Xi$.
	\end{Pro}
	\begin{proof}
	Let $\psi\in \Ker(K^*)$ and define \beq w(\Bx)=\int_{\partial
	D_1}{\overline\psi}_1(\By)\Phi(\Bx,\By)ds_{\By}+\int_{\partial
	D_L}{\overline\psi}_2(\By)\Phi(\Bx,\By)ds_{\By} \mbox{ for
	}\Bx\in\RR^3, \eeq{11} where the integrals exist as improper
	integrals for $\Bx\in \partial D_1\cup\partial
	D_L$. From $K^*\psi=0$ and \eq{10} we have that $w$
	satisfies the Laplace equation \beq \left\{\vspace{0.15cm}
	\begin{array}{ll}\GD w+\lambda^2 w=0, & \mbox{ in }
	D_{a'} \vspace{0.15cm}\\
	\displaystyle\frac{\partial w}{\partial\BGv_{\Bx}}=0,& \mbox{ on
	}\partial D_{a'}.
	\end{array}\right. .
	\eeq{12}
	We then conclude that
	\beq w=0 \mbox{ in }D_{a'}.
	\eeq{12'}
	Then, because by definition $w$ is a solution of Helmholtz equation in $D_L\setminus{\bar{D}}_1$, by analytic continuation we conclude that
	\beq
	w=0 \mbox{ in } D_L\setminus{\bar{D}}_1.
	\eeq{14}
	The next relations for $w$ are in fact the classical jump conditions
	for the single layer potentials with $L^2$ densities (see
	\cite{CoKr-Book98} and references therein). We have,
	\beqa
	&&\displaystyle\lim_{h\rightarrow 0^+}\int_{\partial
	D_1}|w(\Bx\pm
	h\BGv_{\Bx})-w(\Bx)|^2ds_{\Bx}=0\label{15-1}\\
	&&\nonumber\\
	&&\displaystyle\lim_{h\rightarrow 0^+}\int_{\partial
	D_L}|w(\Bx\pm h\BGv_{\Bx})-w(\Bx)|^2ds_{\Bx}=0 \label{15-2}\\
	&&\nonumber\\
	&&\displaystyle\lim_{h\rightarrow 0^+}\int_{\partial
	D_1}\left|2\frac{\partial w}{\partial\BGv_{\Bx}}(\Bx\pm
	h\BGv_{\Bx})-2\frac{\partial
	w}{\partial\BGv_{\Bx}}(\Bx)\pm\psi_1(\Bx)\right|^2ds_{\Bx}=0\label{15-3}\\
	&&\nonumber\\
	&&\displaystyle\lim_{h\rightarrow 0^+}\int_{\partial
	D_L}\left|2\frac{\partial w}{\partial\BGv_{\Bx}}(\Bx\pm
	h\BGv_{\Bx})-2\frac{\partial
	w}{\partial\BGv_{\Bx}}(\Bx)\pm\psi_2(\Bx)\right|^2ds_{\Bx}=0,
	\eeqa{15-4} where $\BGv_\Bx=\BGv(\Bx)$ denotes the exterior normal
	to $\partial D_L$ and $\partial D_1$ respectively and all
	the integral of the normal derivatives of $w$ exists as improper
	integrals. From \eq{14}, \eq{15-1} and \eq{15-2} we obtain that
	\beq w=0\mbox{ on } \partial D_L\cup\partial D_1.
	\eeq{16} 
	Next from \eqref{ass-lambda} and by uniqueness of the interior Dirichlet
	problem for $w$ on $D_1$ and \eq{16} we obtain \beq w=0
	\mbox{ in } \bar{D}_1. \eeq{17} From \eq{14}, \eq{17},
	and the two jump relations \eq{15-3}, we obtain that \beq \psi_1=0
	\mbox{ on } \partial D_1. \eeq{17'} Equation \eq{17'}
	used in the definition of $w$ given at \eq{11}, implies \beq
	w(\Bx)=\int_{\partial
	D_L}{\overline\psi}_2(\By)\Phi(\Bx,\By)ds_{\By}, \mbox{ for
	}\Bx\in\RR^3.\eeq{17''} 
	Relations \eq{14}, \eq{16}, and
	\eq{17} imply that \beq
	    w=0 \mbox{ in } {\bar D_L} .
	\eeq{18}
	From the interior jump condition given in \eq{15-4} together with \eq{18}
	we have that
	\beq
	\displaystyle \frac{\partial w}{\partial \BGv_\Bx} = -\frac{1}{2}{\overline\psi}_2 \mbox{ a.e. on }
	\partial D_L.
	\eeq{20} 
	Since $w$ is a solution of the Helmholtz equation in $D_L$, by using the Green representation theorem for $\Bx\in \RR^3\setminus {\overline D_L}$ we obtain,
	
	\begin{eqnarray}
	\label{w-at-infinity}
	0&=&\int_{\partial D_L}
	\frac{\partial w}{\partial \nu_\By}(\By)\Phi(\Bx,\By)ds_\By - 
	\int_{\partial D_L}
	w(\By)\frac{\partial \Phi(\Bx,\By)}{\partial \nu_\By}(\By)ds_\By\nonumber\\
	& &\nonumber\\
	&=&-\frac{1}{2} \int_{\partial D_L}
	{\overline\psi}_2(\By)\Phi(\Bx,\By)ds_\By\nonumber\\
	& & \nonumber\\
	&=& -\frac{1}{2} w(\Bx),
	\end{eqnarray}
	where we used  \eqref{17''},\eq{18} and \eq{20} in the equalities above.
	Thus from \eqref{w-at-infinity} and the jump conditions given at \eqref{15-4} we obtain $\psi_2=0$ on $\partial D_L$.
	\end{proof}
	
	Let us introduce the following space of functions,
	\[
	    U \equiv K(C(\partial D_{a'}) .
	\]
	It is clear that $U$ is a subspace of $\Xi$. Moreover we have,
	\begin{lemma}\label{LMMA:U}
	    The set $U\subset\Xi$ is dense in $\Xi$.
	\end{lemma}
	\begin{proof}
	    We first observe that the subspace $U\subset \Xi$ satisfies
	\beq
	    \overline{U} =\left(U^\perp\right)^\perp,
	\eeq{26} where here and further in the proof, for a given set
	$M\subset\Xi$, $\overline{M}$ and $M^\perp$ denote its closure and
	orthogonal complement respectively in the $L^2$ topology generated
	on $\Xi$ by the scalar product defined at \eq{EQ:IP}. Property
	\eq{26} is classic for subspaces in a Hilbert space (see
	\cite{B}). On the other hand we also have that \beq
	    U^\perp=\Ker(K^*).
	\eeq{27}
	Indeed let $\xi=(\xi_1,\xi_2)\in U^\perp$. Then, for all $\ph\in C(\partial D_{a'})$  we have,
	\beqa
	    0=(K\ph,\xi)_{\Xi}&\Leftrightarrow&(\ph,K^*\xi)_{L^2(\partial D_{a'})}=0\Leftrightarrow\nonumber\\
	&\Leftrightarrow& K^*\xi=0\Leftrightarrow \xi\in
	\Ker(K^*) .
	\eeqa{28}
	Properties \eq{26} and \eq{27} imply that
	\beq
	{\overline U}= \Ker(K^*)^\perp.
	\eeq{29}
	Proposition \ref{Pro-1} together with \eq{29} imply the density of $U$ in $\Xi$.
	\end{proof}
	We are now in the position to state and prove an essential result of the paper.
	\begin{lemma}
	\label{thm-1}
	 Let $D_1, D_{a'}$ be given as in \eqref{3} and \eqref{3'}. Let $v=(v_1,v_2)\in C^1({\bar D}_1)\times L^2(\partial D_L)$
	 be such that $v_1$ is a solution of Helmholtz equation in $D_1$.
	Define the double layer potential $\CD$ with density $\varphi\in L^2(\partial D_{a'})$ as,
	 $$ \displaystyle\CD \varphi(\Bx)=\int_{\partial
	 D_{a'}}\varphi(\By)\frac{\partial\Phi(\Bx,\By)}{\partial\nu_{\By}}ds_\By, \mbox{ for }\Bx\in \RR^3\setminus \bar{D}_{a'}.$$
	 Then $\CD:L^2(\partial D_{a'})\rightarrow C(\RR^3\setminus {\overline D}_{a'})$
	 is a continuous operator between $L^2(\partial D_{a'})$ and $C(\RR^3\setminus
	 {\overline D}_{a'})$ endowed with their natural topologies. Moreover, there exists a sequence $\{v_n\}\subset C(\partial D_a)$ such that
	 $$\CD v_n\rightarrow v_1 \mbox{ strongly in }H^1(D_c), \mbox {and }\; \CD v_n\rightarrow v_2 \mbox{ strongly in } L^2(\partial D_L).$$
	\end{lemma}
	\begin{proof}
	    We first observe that $v\in\Xi$.
	Then the definition of $U$ and Lemma~\ref{LMMA:U} imply that there exists a sequence $\{v_n\}\subset C(\partial D_a)$ such that
	\beq
	K(v_{n})\rightarrow v \mbox{ strongly in }\Xi.
	\eeq{29'}
	From the definition of the $\Xi$ topology and \eq{29'} we conclude that
	\beqa
	&&\Vert K_1v_{n}-v_1\Vert_{L^2(\partial D_1)}\rightarrow
	 0,\nonumber\\
	 &&\label{30'}\\
	&&\Vert K_2v_{n}-v_2\Vert_{L^2(\partial D_L)}\rightarrow
	 0.\nonumber
	\eeqa{30}
	Observe that, by definition, $K_1v_{n}$ (resp. $K_2 v_{n}$) is the restriction
	to $\partial D_1$ (resp. $\partial D_L$) of $\CD v_{n}$ (resp. $\CD v_{n}$)
	where $\CD$ was defined in the statement of the Theorem. From the properties of $\CD$,
	the hypothesis on $v_1,v_2$ and the regularity results of
	Lemma~\ref{lemma-1}, we conclude that
	\beqa &&\Vert
	\CD v_{n}-v_1\Vert_{H^1(D_c)}\leq C_1\Vert K_1 v_{n}-v_1\Vert_{L^2(\partial D_1)},\nonumber\\
	 &&\label{31}\\
	&&\Vert \CD v_{n}-v_2\Vert_{L^2(\partial D_L)}= 
	\Vert K_2v_n-v_2\Vert_{L^2(\partial
	D_L)},\nonumber\eeqa{31'} 
	 for some constant $C_1=C_1(\lambda,D_c,D_1)$. Finally from \eq{30'} and \eq{31} we obtain the
	statement of the Lemma.
	\end{proof}
	
	From Lemma \ref{thm-1} applied with $v=(E_{xi},0)$ we deduce that there exists a sequence $v_n\in C(\partial D_{a'})$ such that for any $\epsilon<<1$ there exists an $N_\epsilon\in\mathbb N$ such that for any $n>N_\epsilon$ the functions $u_n$ given by
	\begin{equation}
	\label{defun}
	u_n=\int_{\partial D_a}v_n(\By)\frac{\partial \Phi(\Bx,\By)}{\partial \nu_\By}ds_\By,\end{equation} satisfy \eqref{un-0}. Moreover, for $L$ large enough we have,
	\begin{equation}
	\label{un}
	\left\{\begin{array}{llll}
	\vspace{0.2cm}\GD u_n +\lambda^2 u_n=0, \mbox{ in } D\setminus {\bar D}_a& \vspace{0.2cm}\\
	u_n(\Bx)=E_b(\Bx)=\int_{\partial D_a}v_n(\By)\frac{\partial \Phi(\Bx,\By)}{\partial \nu_\By}ds_\By& \mbox{ for }\Bx\in \partial D_a\vspace{0.2cm}\\
	\Vert u_n\Vert_{L^2(\partial D)}\leq\epsilon\vspace{0.2cm}\\
	\Vert u_n+E_{xi}\Vert_{H^1( D_c)}\leq\epsilon.\vspace{0.2cm}\end{array}\right.
	\end{equation}
	 where we have used the decay of the double layer potential to show that $\Vert u_n\Vert_{L^2(\partial D\setminus \partial D_L)}=o(1)\mbox {with respect to $L\rightarrow\infty$}$. Observe that the functions $v_n$ in \eqref{defun} can be obtained for example through a Tikhonov regularization procedure and we have 
	$$v_n=(\alpha_n I+K^*K)^{-1}K^*v$$ for some $\alpha_n\rightarrow 0$.
	Thus, we showed that there exists a sequence of possible boundary data  $E_b$ described in \eqref{un} such that  the sequence $u_n$ satisfies the statement of Theorem \ref{principala-teorema}.
	\end{proof}

	\begin{Arem}\label{Rem1}
	We observe that one can easily adapt the proof of Theorem
	\ref{thm-1} to the case of two or more mutually
	disjoint regions of interest. Thus, following the same arguments
	as before, one will obtain a class of solutions for
	Question~\ref{QUST:Main} in this general context. 
	\end{Arem}

%
	%
	
	\begin{Arem}\label{Rem5}
	We also would like to point out that our results are immediately applicable to 2D or 3D acoustics thus answering the problem of active acoustic control as well.
	\end{Arem}
	%
	
	\section{Numerical support}
	
	\label{SEC:Num}
	
	In this Section we will consider $D_a=[-l,l]\times B_\delta(\B0)$ and assume $D_c$ to be an annular shell coaxial with and near the antenna $D_a$,  defined by $D_c=\{B_{\Gd+l_1+l_2}(\B0)\setminus B_{\Gd+l_1}(\B0)\}\times[-a,a]$, where $l_1,l_2,a$ are given positive reals to be specified in the numerical tests.
	In Figure \ref{FIG:Geo} we sketch the considered geometry. 
	We will next present several numerical simulations to support the theoretical results obtained above. First note that in this regime the electromagnetic waves propagating in the $x$ direction are given by:
	 \begin{equation}\label{f-gen-form}\left\{\begin{array}{ll}
	 \BE(x,y,z)=\BE_0(y,z)e^{i(\beta x-\omega t)}, & \\
	 \BB(x,y,z)=\BB_0(y,z)e^{i(\beta x-\omega t)},\end{array}\right.\end{equation}
	 for some $\beta$ and where $\BE_0=(E_x, E_y, E_z)$ and $\BB_0=(B_x, B_y, B_z)$.
	 After using \eqref{f-gen-form} in the Maxwell equations we arrive at the following,
	 
	 \begin{equation}
	 \label{transverse-fields}\left\{\begin{array}{llll}\vspace{0.2cm}
	 E_y=\frac{i}{\left(\frac{\omega}{c}\right)^2-\beta^2}\left(\beta\frac{\partial E_x}{\partial y}+\omega \frac{\partial B_x}{\partial z}\right), & \\
	 \vspace{0.2cm}E_z=\frac{i}{\left(\frac{\omega}{c}\right)^2-\beta^2}\left(\beta\frac{\partial E_x}{\partial z}-\omega \frac{\partial B_x}{\partial y}\right), & \\
	 \vspace{0.2cm}B_y=\frac{i}{\left(\frac{\omega}{c}\right)^2-\beta^2}\left(\beta\frac{\partial B_x}{\partial y}-\frac{\omega}{c^2} \frac{\partial E_x}{\partial z}\right), & \\
	 \vspace{0.2cm} B_z=\frac{i}{\left(\frac{\omega}{c}\right)^2-\beta^2}\left(\beta\frac{\partial B_x}{\partial z}+\frac{\omega}{c^2} \frac{\partial E_x}{\partial y}\right),
	 \end{array}
	 \right.
	 \end{equation}
	 where $c=\frac{1}{\sqrt{\mu\epsilon}}$ with $\mu,\epsilon$ denoting the permeability and permittivity of air. We also have that the longitudinal components of the fields $E_x$ and $B_x$ satisfy in $B_R(\B0)$ the following two-dimensional Helmholtz equation,
	 \begin{eqnarray}
	 \label{helmholtz-full-2D}
	 \frac{\partial^2}{\partial y^2}E_x+\frac{\partial^2}{\partial z^2}E_x+\left(\left(\frac{\omega}{c}\right)^2-\beta^2\right)E_x&=& 0,\label{TM}\\
	 \frac{\partial^2}{\partial y^2}B_x+\frac{\partial^2}{\partial z^2}B_x+\left(\left(\frac{\omega}{c}\right)^2-\beta^2\right)B_x&=& 0.\label{TE}
	 \end{eqnarray}
	 To the above equations we need to add the boundary conditions at the boundary of the waveguide, $\BE_0\cdot{\bf \tau}=0$ and $\BB_0\cdot{\bf n}=0$ where ${\bf \tau}$, ${\bf n}$ are the tangential and respectively exterior normal to the boundary surface of the waveguide and where we used the fact that the waveguide surface is a perfect conductor.
	 
	 From \eqref{transverse-fields} and \eqref{TM} with \eqref{TE} we have that every wave traveling in the $\Bx$ direction in the waveguide is determined by its longitudinal components, $B_x$ and $E_x$. Thus  the waves propagating in the waveguide can be represented as a  superposition of transverse electric TE ($E_x=0$) and transverse magnetic TM ($B_x=0$) waves. It is also well known that transverse electromagnetic waves TEM, waves with $E_x=0=B_x$ do not propagate in a hollow waveguide. In this paper we will focus only on the study of TM waves.

	 First, after changing to cylindrical coordinates in \eqref{helmholtz-full-2D} one can easily observe that the incoming TM guided wave propagating in the $x$ direction, has the longitudinal component of its electric field $\BE_{x}$ represented by,
	 
	  \begin{figure}[!ht]
	       \centering
	      \includegraphics[width=5in, height=5in]{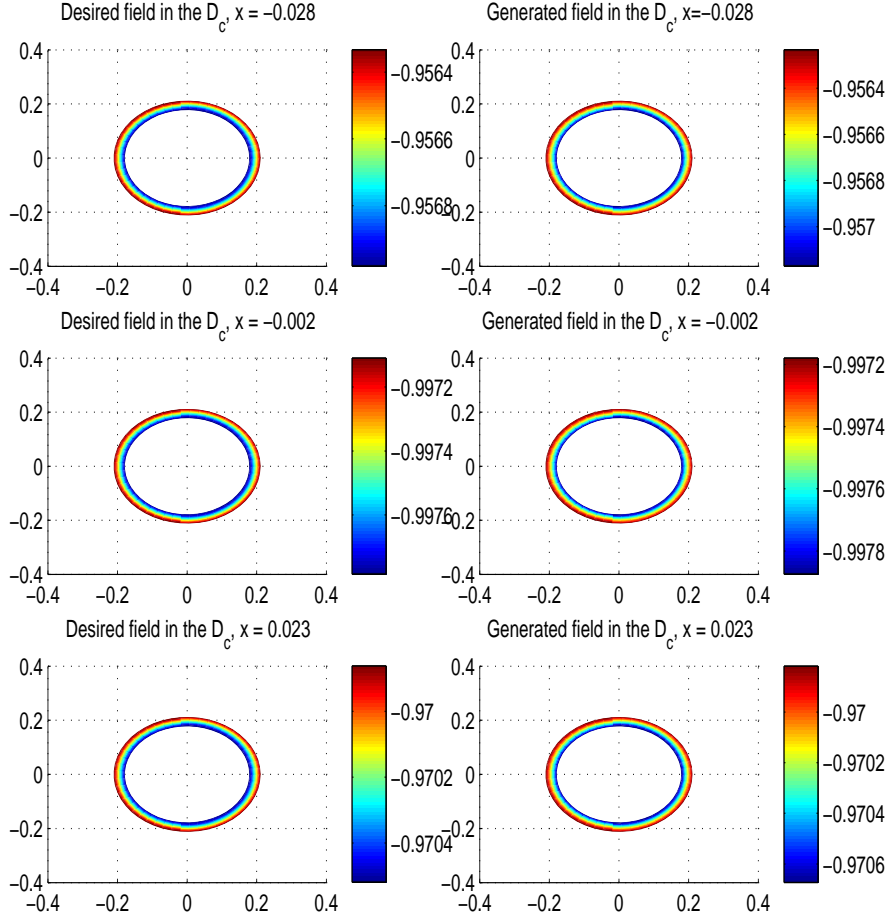}
	       \vspace{0cm}
	       \caption{Control of $TM_{10}$ mode}
	       \label{Fig1}
	   \end{figure}

	 \begin{equation}
	 \label{interogatorTM}
	 \BE_x(r,\theta,x)=\sum_{m,n}E_{mn}J_m(\frac{r\chi_{mn}}{R})cos(m\theta)e^{i\beta_{mn}x},
	 \end{equation}
	  where $E_{mn}$ are coefficients determined by the interrogating field, $J_m$ represents the $m$-th order Bessel-$J$ function, $\chi_{mn}$ represents the $n$-th root of $J_m$, and $\beta_{mn}=\sqrt{\left(\frac{\omega^2}{c^2}-\frac{\chi^2_{mn}}{R^2}
	  \right)}$. For any $m,n$ the term  $E_{mn}J_m(\frac{r\chi_{mn}}{R})cos(m\theta)e^{i\beta_{mn}x}$ is   called the $TM_{mn}$ normal mode and it solves \eqref{TM} with zero boundary data on $\partial B_R(\B0)$, where $\lambda=\lambda_{mn}=\frac{\omega^2}{c^2}-\beta_{mn}^2=\frac{\chi^2_{mn}}{R^2}$. Note that we can chose $D_{a'}$ and $D_1$ in \eqref{3} and \eqref{3'}so that $\lambda_{mn}$ satisfies the hypothesis of Theorem \ref{principala-teorema}. Thus we can use the theoretical results of the previous section.
	  In our numerical results we will show control of the first dominant TM mode with normalized amplitude, i.e., $E_{1x}=J_0(\frac{r\chi_{10}}{R})$. For the numerical experiments we considered a radial frequency of 300Mhz, the waveguide $D$ with $R=5$ and the small cylindrical antenna, $D_a$, with $\delta=0.05$ and $l=0.3$ is placed in it to control the longitudinal component of the electric field in an incoming $TM_{10}$ mode. The control is numerically showed in a near field cylindrical coaxial shell, $(x,r,\theta)\in D_c=[-0.1,0.1]\times [0.13,0.16]\times [0,2\pi]$. \medskip
	  
	  \begin{figure}[!ht]
	         \centering
	          \includegraphics[width=4in, height=4in]{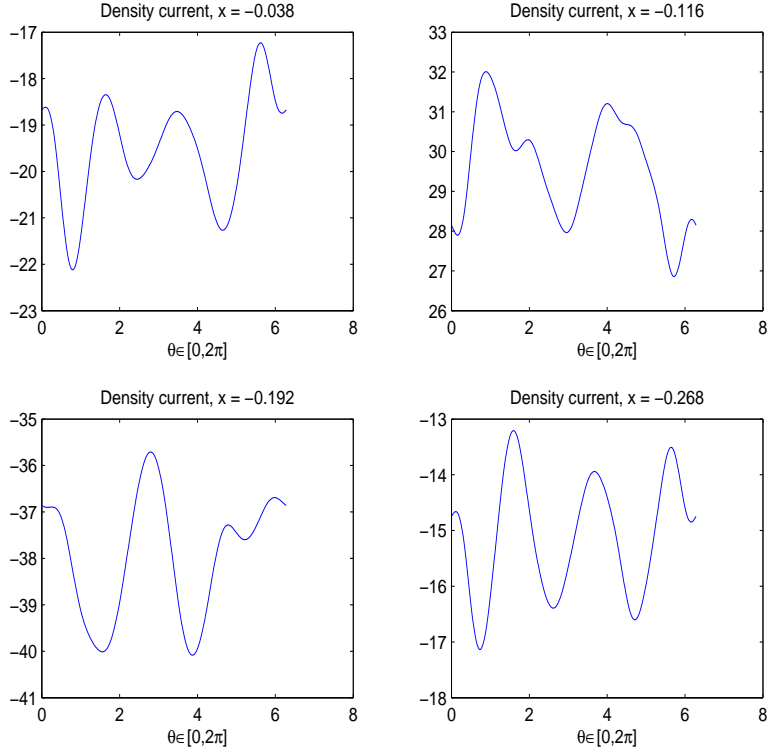}
	         \caption{Density current $v_n$ as function of $\theta$ for four points on the negative $x$ axis}
	         \label{Fig2}
	     \end{figure}
	     
	      \begin{figure}[!ht]
	              \centering
	               \includegraphics[width=4in, height=4in]{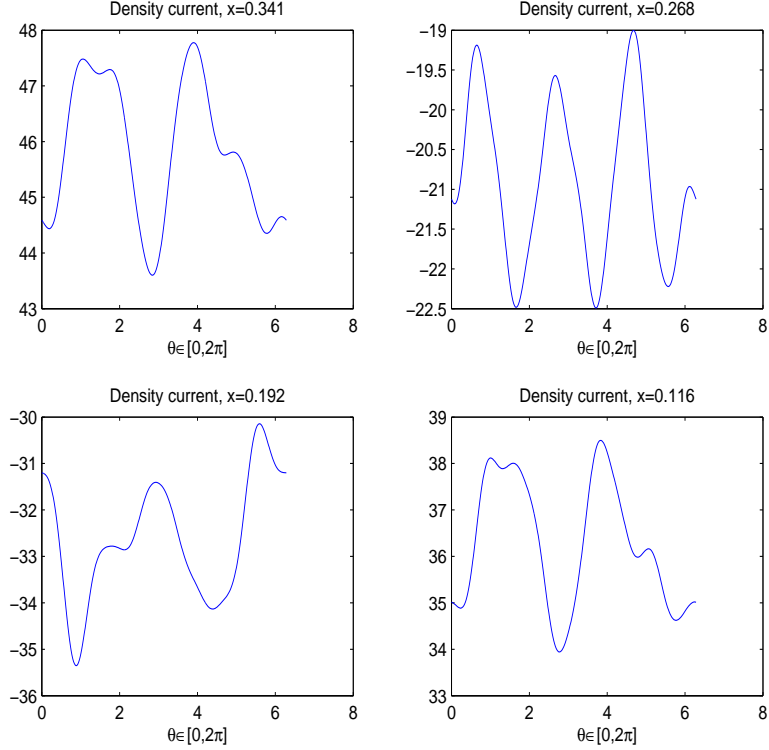}
	               \caption{Density current $v_n$ as function of $\theta$ for four points on the positive $x$ axis}
	              \label{Fig3}
	          \end{figure}

	  Figure \ref{Fig1} describes the contour plots of cross-sections of the desired field $-\BE_x$ (left column) versus the cross-sections of the field generated by the active antenna ( right column). The three rows in Figure \ref{Fig1} represent the two fields in $D_c$ for three cross-section of the antenna, $x=-0.028$, $x=0.002$, $x=0.023$ respectively.

	  Figure \ref{Fig2} and Figure \ref{Fig3} each describe the density $v_n$ introduced in Theorem \ref{principala-teorema} as a function of the angle $\theta\in[0,2\pi]$ for four different values of $x$.  We used the Moment Method and Tikhonov regularization scheme to approximate the density function $v_n$ on several cross-sections around the center of the antenna.
	  
	   One can see the high accuracy of the control as well as the limited power budget needed for the desired control which is not the case in the far field schemes (see \cite{Norris}). 
	     In fact the relative $L^{\infty}$ error between the given $E_{1x}$ and the field generated by our active source is of order $O(10^{-4})$ for the dominant mode $TM_{10}$. 
	   We only presented the mode with the lowest cutoff frequency but mention that by superposition one will be able to control, in theory, the longitudinal component of any incoming TM field.

		\section{Feasibility discussion}
		\label{F}
		In this section we will present the analysis of {\bf Problem 2.} of Question \ref{QUST:Main}. The next result, shows the connection between the class of solutions obtained in the previous section and the main problem stated at Question \ref{QUST:Main}. We have,
		\begin{theorem}
		\label{Thm-feasibility}
		There exists realistic small antennas $D_a$ and small magnetic currents $\BM$ to be instantiated on $\partial D_a$ such that the electromagnetic field $(\BE,\BH)$ generated will have the property that the longitudinal component of its electric field $E_{xs}$ will satisfy
		\begin{equation}
		\label{feas-res}
		\Vert u_n-E_{xs}\Vert_{L^2(\partial D_a)}=o(1),\mbox{ as } n\rightarrow\infty,
		\end{equation}
		Thus, due to Lemma \ref{lemma-2}, $E_{xs}$ will be very close to $u_n$ described in \eqref{defun-0} and hence achieve the desired control in $D_c$ with vanishingly small trace values on the boundary of the waveguide.  The surface magnetic current needed for this is characterized by the property that it  approaches zero near the ends of the antenna (tappered to zero there) and it satisfies the condition $\BM\cdot \hat{\theta}=E_b$ on $\partial D_a$ where $\hat{\theta}$ denotes the unit vector in the $\theta$ direction for a cylindrical coordinate system on $D_a$.
		\end{theorem}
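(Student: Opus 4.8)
The plan is to convert the scalar data produced by Theorem~\ref{principala-teorema} into an impressed surface magnetic current via the classical electromagnetic equivalence principle, and then read off the near–field behaviour of the radiated field from the well–posedness of the associated exterior boundary value problem. Fix the mode to be controlled, say $TM_{mn}$, so that the transverse eigenvalue is $\lambda=\lambda_{mn}=\chi_{mn}^2/R^2$, and choose the antenna $D_a=[-l,l]\times B_\delta(\B0)$ thin and the auxiliary domains $D_{a'}\subsetneq D_a$ and $D_1$ of \eqref{3}--\eqref{3'} so that all hypotheses of Theorem~\ref{principala-teorema} are met; here smallness of $D_a$ is genuinely used, both to accommodate the separation requirement $D_c\subsetneq D_1\subsetneq D_L\setminus\overline{D}_a$ and because the Neumann transverse eigenvalues of $B_\delta(\B0)$ tend to $+\infty$ as $\delta\to0$, so that $-\lambda^2$ automatically avoids the Neumann spectrum of $D_a$. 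Theorem~\ref{principala-teorema} then produces densities $v_n$ on $\partial D_{a'}$ and functions $u_n$ as in \eqref{defun-0} satisfying \eqref{un-0}; since $v_n$ lives on the strictly interior surface $\partial D_{a'}$, the double layer potential $u_n$ is real analytic across $\partial D_a$, so $E_b:=u_n|_{\partial D_a}\in C^{\infty}(\partial D_a)$ is the smooth boundary datum we will realize.

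Next I would invoke the equivalence principle. If $D_a$ is a perfect electric conductor and a tangential surface magnetic current $\BM$ is impressed on $\partial D_a$, then the field inside $D_a$ vanishes and the field outside is the unique radiating solution $(\BE_s,\BB_s)$ of the time–harmonic Maxwell system in $D\setminus\overline{D}_a$ with $\BGv\times\BE_s=-\BM$ on $\partial D_a$, with vanishing tangential electric field on the perfectly conducting wall $\partial D$, and with an outgoing modal radiation condition as $|x|\to\infty$ (finitely many propagating $TM$ and $TE$ modes plus an evanescent tail); moreover $\BM\mapsto(\BE_s,\BB_s)$ is continuous for the fixed generic frequency $\omega$. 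On the lateral part of $\partial D_a$ one has $\BGv=\hat r$ and $\hat r\times\BE_s=-E_{xs}\,\hat\theta+E_{\theta s}\,\hat x$, so that $\BM=E_{xs}\hat\theta-E_{\theta s}\hat x$ there and, in particular,
\[
E_{xs}\big|_{\partial D_a}=\BM\cdot\hat\theta .
\]
Thus the value on $\partial D_a$ of the longitudinal electric component of the radiated field is governed solely by the azimuthal component of $\BM$; the $\hat x$--component of $\BM$ equals the secondary term $-E_{\theta s}|_{\partial D_a}$ prescribed by the transverse relations \eqref{transverse-fields}, and it only reshapes the rest of the field without altering $E_{xs}|_{\partial D_a}$. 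Since $E_{xs}$, being a Cartesian component of a source–free time–harmonic field, satisfies the same scalar Helmholtz equation and Sommerfeld-type radiation condition as $u_n$ in \eqref{2}, while $E_{xs}=0$ on $\partial D$ (where $\hat x$ is tangent), choosing $\BM\cdot\hat\theta=E_b$ pins $E_{xs}=u_n$ on $\partial D_a$ and, by uniqueness of that exterior scalar problem, forces $E_{xs}$ to agree with $u_n$ up to the $O(\epsilon)$ discrepancy that $u_n$ carries on $\partial D$.

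To obtain a \emph{realizable} current I would taper $E_b$ at the antenna ends: let $\BM_n$ be the tangential field on $\partial D_a$ whose azimuthal component on the lateral surface is $\chi_{\eta_n}(x)E_b$, extended over the end caps so as to vanish near $x=\pm l$, where $\chi_\eta\in C^{\infty}$ equals $1$ on $[-l+\eta,l-\eta]$ and vanishes near $x=\pm l$ (its $\hat x$--component may be taken equal to the secondary term above or simply set to zero). By the identity above the radiated field then has $E_{xs}|_{\partial D_a}=\chi_{\eta_n}E_b$, hence
\[
\Vert u_n-E_{xs}\Vert_{L^2(\partial D_a)}=\Vert(1-\chi_{\eta_n})E_b\Vert_{L^2(\partial D_a)},
\]
and since for each fixed $n$ the datum $E_b$ is a bounded function, the right–hand side is $O(\sqrt{\eta_n})$ and can be made smaller than $1/n$ by choosing $\eta_n>0$ small; this is \eqref{feas-res}. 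Finally $w:=E_{xs}-u_n$ solves the scalar Helmholtz equation in $D\setminus\overline{D}_a$ with $\Vert w\Vert_{L^2(\partial D_a)}=o(1)$ and $\Vert w\Vert_{L^2(\partial D)}\le\epsilon$ (since $E_{xs}=0$ on $\partial D$ and $\Vert u_n\Vert_{L^2(\partial D)}\le\epsilon$); propagating these bounds across the separating domains by the interior estimate of Lemma~\ref{lemma-1} gives $\Vert w\Vert_{H^1(D_c)}=O(\epsilon)+o(1)$, and together with $\Vert u_n+E_{xi}\Vert_{H^1(D_c)}\le\epsilon$ this yields $\Vert E_{xs}+E_{xi}\Vert_{H^1(D_c)}=O(\epsilon)+o(1)$, i.e. the desired local cancellation in $D_c$ with a vanishingly small trace of $E_{xs}$ on $\partial D$. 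The current built this way is $o(1)$–close to the smooth bounded field characterized by $\BM\cdot\hat\theta=E_b$ and tapers to zero at the antenna ends, as claimed.

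The step I expect to be the main obstacle is making rigorous the well–posedness and continuity of the exterior Maxwell (equivalently scalar Helmholtz) boundary value problem in the \emph{open} waveguide with an $L^2$–type impressed current: one must fix the correct outgoing modal radiation condition at $x\to\pm\infty$ — separating the finitely many propagating modes from the evanescent tail — and show the solution operator is bounded into the near–zone norms used on $D_c$ and $\partial D_a$, uniformly in the mode index $n$. A secondary point needing care is that discarding, or only approximately realizing, the $\hat x$--component of $\BM$ (which is genuinely $O(1)$, not negligible for a thin antenna) perturbs $E_{xs}$ inside $D_c$ only by an amount the above estimate can absorb. A detailed treatment of these matters, together with the further reduction of the surface magnetic current to an equivalent electric conduction current carried by a realistic slot/aperture antenna, is deferred to \cite{OA}.
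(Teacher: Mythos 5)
Your proposal reaches the stated conclusion by a genuinely different construction than the paper's, and the differences are not cosmetic. In the paper the tapering of the current is not obtained by multiplying $E_b$ by a cutoff on a fixed straight cylinder: the antenna itself is taken to be a shrinking surface of revolution with radius $\delta_n(x)=\mu_n d(x)$ vanishing at the ends, the current is exactly $\BM=E_b\hat{\theta}$, and the quantitative content of \eqref{feas-res} comes from Corollary \ref{reg-bound}, i.e.\ from the interplay $\Vert E_b\Vert_{C^2(\partial D_a)}\leq C\sqrt{\mu_n}/(s_n^3\sqrt{\alpha_n})=o(1)$ between the antenna thickness $\mu_n$, the separation $s_n=\dist(\partial D_{a'},\partial D_a)$ and the Tikhonov parameter $\alpha_n$ --- whereas your $o(1)$ comes solely from shrinking the cutoff width $\eta_n$. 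Because the paper's surface is tilted, the exact boundary relation is \eqref{Ex}, $E_{xs}=E_b\sqrt{(\delta_n')^2+1}-\delta_n'(E_z\sin\theta+E_y\cos\theta)$, rather than the clean identity $E_{xs}|_{\partial D_a}=\BM\cdot\hat{\theta}$ you use (which is correct for a straight cylinder); the extra terms are then absorbed using $|\delta_n'|=O(\mu_n)$ from \eqref{deltax}. Most importantly, the paper's auxiliary Maxwell problem \eqref{ext-M} is posed in free space $\RR^3\setminus\bar{D}_a$ with the Silver--M\"uller radiation condition, where existence, uniqueness and $L^2$-continuity of the solution operator are classical, and the closeness of $E_{xs}$ to $u_n$ in $D_c$ and on $\partial D$ is transferred by the exterior regularity lemma of Section \ref{F}, not by a waveguide uniqueness argument.

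This points to the genuine gap in your argument: the step you yourself flag as ``the main obstacle'' --- well-posedness and continuity of the exterior Maxwell (or scalar Helmholtz) problem in the open waveguide with an outgoing modal radiation condition --- is not a deferrable technicality but the load-bearing step, since both the identity $E_{xs}|_{\partial D_a}=\BM\cdot\hat{\theta}$ and the propagation of the boundary discrepancy into $D_c$ presuppose existence and a bounded solution operator for that problem; at frequencies above cutoff, waveguide scattering problems with an interior obstacle can fail uniqueness (trapped modes), so this requires a separate argument that the paper deliberately avoids by working in $\RR^3\setminus\bar{D}_a$. A second concrete defect is your appeal to Lemma \ref{lemma-1} to push the smallness of $w=E_{xs}-u_n$ from $\partial D_a\cup\partial D$ into $D_c$: that lemma is an interior estimate taking Dirichlet data on $\partial D_1$, so it is inapplicable until you have first controlled $w$ on $\partial D_1$; what is actually needed is an exterior estimate with data on $\partial D_a$ of the type stated in the second regularity lemma of Section \ref{F}, and proving such an estimate in the waveguide geometry is again the unresolved well-posedness issue. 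If you replace your waveguide formulation by the paper's free-space formulation \eqref{ext-M}, the rest of your cutoff construction could be made to work, but as written the proof is incomplete.
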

		
	\begin{proof} 
	
For the proof of Theorem \ref{Thm-feasibility} we will need the next Lemma which studies the injectivity of the operator $K$ defined at \eqref{EQ:K}.
		\begin{lemma}
		\label{injectivity-K}
		 Assume the conditions of Theorem \ref{principala-teorema}. Then the operator $K$ is injective.
		\end{lemma}
		
		\begin{proof}
		Consider $w\in L^2(\partial D_{a'})$ such that $Kw=0$. Then we have 
		\beq
		\int_{\partial D_{a'}}w(\By)\frac{\partial \Phi(\Bx,\By)}{\partial \nu_\By}ds_\By=0\mbox{ on }\partial D_1.
			\eeq{i1}
			From  \eqref{ass-lambda} and by using the uniqueness of the interior Dirichlet problem we have that 
			\beq
		\int_{\partial D_{a'}}w(\By)\frac{\partial \Phi(\Bx,\By)}{\partial \nu_\By}ds_\By=0\mbox{ in }{\bar D}_1.
					\eeq{i2}
					By analytic continuation we get 
	\beq\int_{\partial D_{a'}}w(\By)\frac{\partial \Phi(\Bx,\By)}{\partial \nu_\By}ds_\By=0\mbox{ in }D_L\setminus {\bar D}_{a'}.
								\eeq{i3}	
			From \eqref{i3} we have that 
			\beq
		2\int_{\partial D_{a'}}w(\By)\frac{\partial \Phi(\Bx,\By)}{\partial \nu_\By}ds_\By+w(\Bx)=0\mbox{ on }\partial D_{a'}.
							\eeq{i4}
		Next, using our assumption that $-\lambda^2$ is not an interior Neumann eigenvalue for $D_{a'}$, from \eqref{i4} we obtain $w=0$. 
		\end{proof}
		The next Lemma presents a technical regularity result. Since the proof is classical we do not include it here but point out that the result can be obtained by adapting the proof in \cite{Kress-Book99}, (Section 3.4).
		\begin{lemma}
			\label{lemma-2}
			Consider $\lambda\in\RR$ as in Theorem \ref{principala-teorema}. Let $f_e\in H^{\frac{1}{2}}(\partial D_a))$ and let $v_e\in C^2(\RR^3\setminus {\bar{D}_a})\cap C(\RR^3\setminus D_a)$ 
			be the unique solution of the following problem,
			\beq
			\left\{\begin{array}{llll}
			\GD v_e +\lambda^2 v_e=0 \mbox{ in } \RR^3\setminus{\bar{D}}_a\vspace{0.15cm}\\
			v=f_e \mbox{ on } \partial
			D_a\vspace{0.15cm}\\
		\displaystyle\left(\frac{x}{|x|},\nabla v_e(x)) -i\lambda v_e(x)\right)=o\left(\frac{1}{|x|}\right),\mbox{ as }|x|\rightarrow\infty, \vspace{0.15cm}\\
		\mbox{ uniformly for all directions $\displaystyle\frac{x}{|x|}$. }\end{array}\right.
			\eeq{6'}
				Then, for any given open domain $F\subset \RR^3\setminus{\bar D}_a$ we have,
			$$\displaystyle\Vert v_e\Vert_{H^1(F)}\leq
			C\Vert f_e\Vert_{L^2(\partial D_a)},$$
			where $C=C(\lambda,\dist(\partial D_a,\partial F))$ with $\dist$ function denoting the usual distance between two sets. 
			\end{lemma}
		 We will next propose an antenna geometry together with a strategy for the instantiation of a surface magnetic current $\BM$ satisfying the conditions in Theorem \ref{Thm-feasibility}. Remember that the antenna was assumed to be a coaxial cylinder with round edges and small cross-section area. Thus, consider $\Gd_n\in C^2(-l,l)$,  given by $\Gd_n(x)=\mu_n d(x)$ where $\mu_n\rightarrow 0$ as $n\rightarrow\infty$, and $d\in C^2(-l,l)$ is a smooth cut-off function such that there exists a positive real $0\leq c\leq l$ satisfying,
		  \begin{equation} \label{d-d} 0\leq d=\left\{\begin{array}{ll}
		  0, &\mbox{ for } |x|=l\\
		  1, &\mbox{ for } x\in (-l+c,l-c)\end{array}\right.,\end{equation}
		  and $$|d'|<C(l,c), \mbox{ for some constant $C(l,c)$ depending only on $l,c$}. $$
		  Note that, from the definition of $\delta_n$ and \eqref{d-d} we have
		  \begin{equation}
		  \label{deltax}
		  |\delta_n'|\leq C(l,c)\mu_n, \mbox{ for all } n.
		  \end{equation} 
		  Assume the geometry of the antenna to be described by $D_a=[-l,l]\times B_{\delta_n(\B0)}(\Bx)$ for some $l>0\in\RR^3$ for some suitable $n$ large enough. 
		  Then $\Bz\in\partial D_a$ is given by, $\Bz=(x,\delta_n(x)\cos(\theta),\delta_n(x)\sin(\theta))$ for $\theta\in[0,2\pi]$ and $\partial D_a$ is a $C^2$ smooth surface of rotation with local coordinates $(x,\theta)$ and local tangential vectors $(\frac{d{\bf z}}{dx},\frac{d{\bf z}}{d\theta})$.

The following result is a consequence of Lemma \ref{injectivity-K}, classical Fredholm Theory and Tikhonov regularization arguments (see Theorems 3.15,3.17 in \cite{CoKr-Book83} and Theorem 4.13 in \cite{CoKr-Book98} for example),
\begin{corollary}
		\label{reg-bound}
		The boundary data $E_b$ given in Theorem \ref{principala-teorema} satisfies,
		$$\Vert E_b\Vert_{C^2(\partial D_a)}=\frac{C \sqrt{\mu_n}}{s_n^3}\Vert u_n\Vert _{L^2(\partial D_{a'})}\leq \frac{C \sqrt{\mu_n}}{s_n^3\sqrt\alpha_n} \Vert E_{xi}\Vert _{L^2(\partial D_1)},\mbox{ for every }n,$$
		\end{corollary}
		where $s_n=dist(\partial D_{a'},\partial D_a)$. Observe that for suitable $s_n$ (which can be chosen with a suitable choice of $D_{a'}$ satisfying \eqref{3'}) and $\alpha_n$ we have that 
		  \begin{equation}
		  \label{estimare}
		  \frac{C \sqrt{\mu_n}}{s_n^3\sqrt\alpha_n}=o(1), \mbox{ as } n\rightarrow\infty.
		  \end{equation}
		  
		  Let $\BM$ be such that $\BM=E_b(x,\theta)\hat{\theta}$ 
		where $E_b$ is given at \eqref{un}.  Note that estimate \eqref{estimare} and Corollary \ref{reg-bound} indicate that,in the limit when $n\rightarrow \infty$, the current $\BM$ will satisfy the continuity equation. Also, note that $v_n$ introduced at \eqref{reg} belongs to $C^{\infty}(\partial D_a)$. From this, Corollary \ref{reg-bound} we obtain that the vector field $\BM$ will be smooth enough so that for an antenna chosen so that $\lambda$ is not an interior Maxwell eigenvalue the following exterior Maxwell problem problem admits a unique solution (see \cite{CoKr-Book83}, Section 4.4):
		
		\vspace{0.3cm}
		Find $\BE=(E_x,E_y,E_z),\BH=(B_x,B_y,B_z) \in C^1(\RR^3\setminus{{\bar D}_a})\cup C(\RR^3\setminus D_a)$ such  that
		\begin{equation}
		\label{ext-M}
		\left\{\begin{array}{llll}
		\nabla\times\BE-i\lambda \BH=0, &\;\nabla\times\BH+i\lambda \BE=0\mbox{ in }\RR^3\setminus{{\bar D}_a} \vspace{0.2cm}\\
		-\nu\times\BE=\BM,& \mbox{ on }\partial D_a\vspace{0.2cm}\\
		\left[\BH,\frac{\Bx}{|\Bx|}\right]-\BE=o(\frac{1}{|\Bx|}),& \mbox{ as }|\Bx|\rightarrow\infty\vspace{0.2cm}\\
		\left[\BE,\frac{\Bx}{|\Bx|}\right]+\BH=o(\frac{1}{|\Bx|}),& \mbox{ as }|\Bx|\rightarrow\infty,\vspace{0.2cm}\end{array}\right.\end{equation}
		where $\nu$ is the unit exterior normal to $D_a$ and where $\BM$ is as above. After simple algebraic manipulations of the boundary condition in \eqref{ext-M} we observe that the first component of the electric field $\BE$ solution of \eqref{ext-M}, namely $E_{xs}$, is given by,
		\begin{equation}
		\label{Ex}
		E_{xs}(x,\theta)= E_b(x,\theta)\sqrt{(\delta'_n(x))^2+1}-\delta'_n(x)(E_z \sin(\theta)+E_y \cos(\theta)).
		\end{equation}
		Using \eqref{Ex} we obtain
		\begin{eqnarray}
		\Vert E_{xs}-E_b(x,\theta)\Vert_{L^2(\partial D_a)}&\leq& \left\Vert \frac{(\delta'_n(x))^2 E_b(x,\theta)}{\sqrt{(\delta'_n(x))^2+1}+1}\right\Vert_{L^2(\partial D_a)}\!\!\!\!+\Vert \delta'_n(x)(E_z \sin(\theta)+E_y \cos(\theta))\Vert_{L^2(\partial D_a)}\nonumber\\
		&\leq& o(1), \mbox{ as }n\rightarrow\infty
		\end{eqnarray}
		In the second identity of \eqref{Ex} we have used \eqref{deltax}, \eqref{estimare} and the fact that $\Vert E_z\Vert_{L^2(\partial D_a)}$ and $\Vert E_y\Vert_{L^2(\partial D_a)}$ are $o(1)$ on $\partial D_a$ as $n\rightarrow\infty$. Behavior of $\Vert E_z\Vert_{L^2(\partial D_a)}$ and $\Vert E_y\Vert_{L^2(\partial D_a)}$ can be obtained by observing that from Corollary \ref{reg-bound} we have $\Vert \BM\Vert_{L^2(\partial D_a)}\rightarrow 0$ on $\partial D_a$ as $n\rightarrow \infty$ and then by employing the continuity of the solution operator for problem \eqref{ext-M} with $L^2$ boundary data (see \cite{Angel-Kirsch}, Section 6.3). In fact this behavior can also be obtained from the observation that based on our  strategy of a possible instantiation presented below we need small conduction currents to excite the small magnetic current $\BM$ and thus the electric field will be very small on the surface of the antenna.
		
		Next by using Lemma \ref{lemma-2} we obtain that for suitable chosen $\alpha_n$ there exist $n$ large enough such that for any sub-domain $F$, $F\subsetneq \RR^3\setminus{\bar D}_a$, we have
		\begin{equation}
		\label{aprox-00}
		\Vert E_{xs}-E_b\Vert_{H^1(F)}\leq Co((\mu_n)^\frac{1}{2}),\mbox{ as }n\rightarrow\infty.
		\end{equation}
		where $C=C(\lambda,\dist(\partial F, \partial D_a))$ and we have used that $|\partial D_a|=O(\mu_n)$. 
		
		Thus, we showed that for large enough $n$, an antenna with the geometry described above and with a magnetic current $\BM$ excited on its surface as indicated above generates an electromagnetic field $\BE,\BH$, solution of \eqref{ext-M}, which  satisfies the properties of Theorem \ref{Thm-feasibility}. 
		\end{proof}
		
			\begin{figure}[!ht]
					       \centering
					       \includegraphics[width=4in, height=3in]{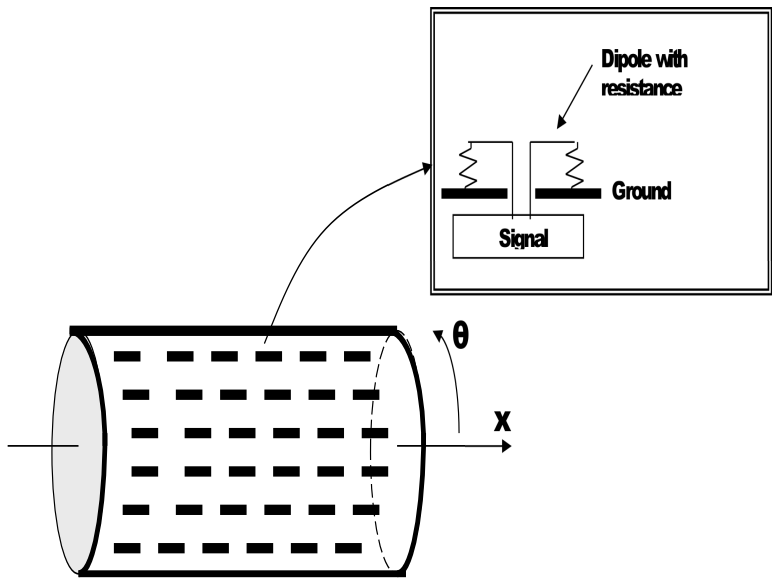}
					       \caption{The proposed antenna is a closely spaced array on a metallic cylinder with small diameter.  Each dipole (as shown in the inset) is resistively loaded to the metallic cylinder so the dipole currents are spatially constant across the length of the dipole.    }
					       \label{Fig-Instantiation}
					   \end{figure}
				\vspace{0.5cm}	  	   
					   
The central purpose of this paper is to study the process whereby electromagnetic fields can be nulled or otherwise controlled locally in the sinusoidal or harmonic case.  The infinite circular waveguide was studied because of the stability of its modes and their simplicity relative to other structures including far field regimes. 

However, there also is a practical interest.  On one hand, devices to switch signals from one waveguide to another in a branching configuration are commercially available and based on our review of the literature and practice these devices appear to be mechanical in nature.  Additionally, there has also been consideration of suppression of unwanted modes ( references \cite{*},\cite{**}). Thus, this research on suppression of longitudinal modes in a cylindrical waveguide may have some interest to microwave engineers for mode suppression or switching.   This leads to a consideration of physical realization of the control activity specified by the analysis presented above.

		We will now discuss about a possible instantiation of this antenna (see Figure \ref{Fig-Instantiation}). As discussed above, the sufficient condition that the longitudinal component of the electric field of the antenna (modeled as magnetic surface currents $\BM$) has the desired control properties in the near field control region $D_c$ with vanishingly small values on $\partial D$, is that the inner product of the local magnetic current $M(x,\theta)$ with the unit vector $\hat{\theta}$ equals $E_b$ described \eqref{un-0}.  Therefore from the point of view of realizing this nulling antenna this sufficient condition must be instantiated. The complexity of the problem is well illustrated by reference to Figures \ref{Fig2} and \ref{Fig3} above.  It is evident that the field control is made possible in this analysis by a boundary density function that is a function of distance along the longitudinal axis of the central antenna, and is also an independent function of the angle theta around the axis.  The controlled modes are modes that are parallel to the antenna surface.  In the electromagnetic case, the question arises of how this would be accomplished.
		      
		      In the case of exterior scattering problems, Kersten (\cite{+}) has shown that boundary densities may be replaced for a set of interior electric and magnetic dipoles while preserving the exterior scattered field (see also  \cite{++}   ).  While an argument equivalent in rigor is not known to us concerning antenna field generation by secondary sources, the method of auxiliary sources has been used effectively in the assessment of antenna radiation properties including far field structures and input impedance ( \cite{=}, \cite{==}, \cite{===}).   Thus, we have been encouraged to consider an electromagnetic physical realization of the complex radiating structure called for in the analysis described above.  Also, perhaps most importantly, Theorem 5.1 translates the Helmholtz control results and calculations of Sections \ref{SEC:Form}, \ref{SEC:Exist} , and \ref{SEC:Num} into electromagnetic terms indicating the existence of a suitable antenna. 
	
		 A candidate structure which we are studying to realize the requirements of this analysis is an array of very small horizontally oriented dipoles supported on a conductive surface as diagrammed in the Figure \ref{Fig-Instantiation}. Since the dipoles are resistively loaded (that is, there is a conductive path to the cylindrical ground plane), a spatially constant, temporally oscillating current can be maintained in the metallic arms of each dipole using the signal generator.  Thus, this arrangement permits a varying current with theta and with the linear dimension x.  This is an example of a dense array, an antenna design of present interest (\cite{^}).  This surface current developed by the array is such that it is proportional to the electric field parallel to the cylinder (a longitudinal) mode, and thus the current acts as the desired magnetic current density.

		 \begin{figure}[!htp]
		 					       \centering
		 					       \includegraphics[width=5in, height=4in]{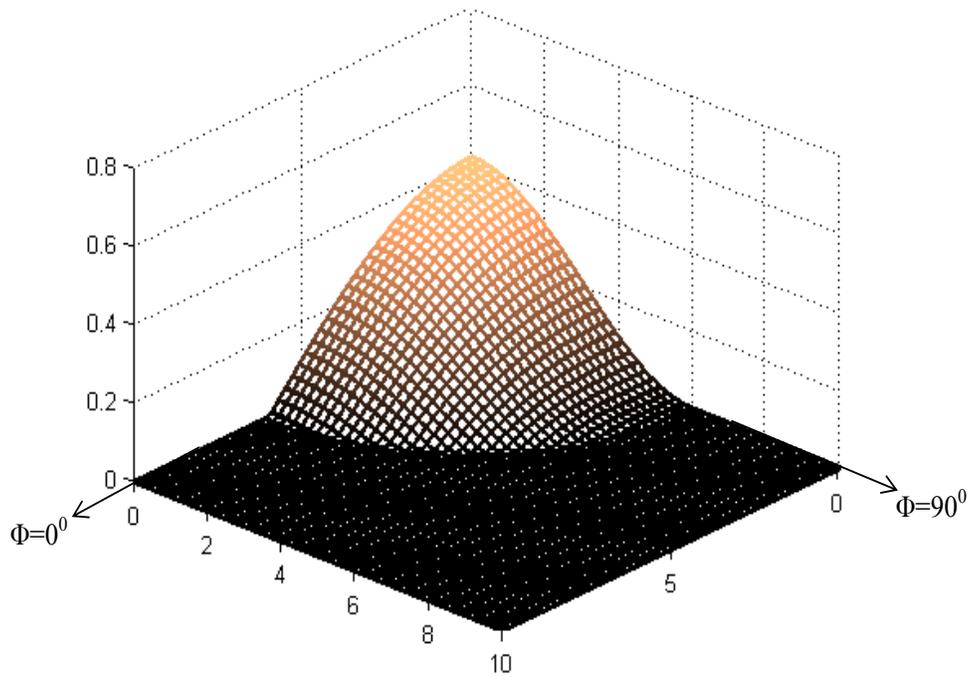}
		 					       \caption{This figure shows an estimate of the dipole field amplitude in the direction parallel to the cylindrical array structure as a function of elevation (polar) angle and azimuth angle around the horizontal dipole (in the figure placed in the north corner of the $x,y$ plane  along the $\Phi=0°$  axis) .  Each plane in this figure orthogonal to the $x,y$ plane and including the z axis, provides a view of the field amplitude with elevation angle  at a given azimuth.  The  $\Phi=0°$ plane corresponds to the zero azimuth angle. }
		 					       \label{Fig-dipole}
		 					   \end{figure}
		 \vspace{0.5cm}
		 
		 Each dipole produces a field that has a component that is directed radially (orthogonal to the cylinder surface), but most importantly, as stated in the paragraph above, there is a component that is parallel to the cylindrical surface.  The field structure of a horizontal dipole over a perfect ground plane is shown in Figure \ref{Fig-dipole} following Balanis’ development(\cite{B}).  This field structure approximates the field of the dipole mounted on a cylindrical metallic structure as shown above.  Combining these field structures using array principles results (treating each field structure as a basis function) in an approximation to the effect of the controlling current density specified in this Section.


	\section*{Acknowledgment}
	The Authors would like to acknowledge the AFOSR support of this work under the 2013 YIP Award FA9550-13-1-0078.

	
	
	\end{document}